\newtheorem{theorem}{Theorem}[section]
\newtheorem{lemma}{Lemma}[section]
\newtheorem{proposition}{Proposition}[section]
\newtheorem{remark}{Remark}[section]
\numberwithin{equation}{section} 
\numberwithin{equation}{section}
\title[Intrinsic scalings with non-standard growth]
{Intrinsic scalings with non-standard growth}
\author[M. Amaral]{Marcelo Amaral}
\address{UNILAB, ICEN, Universidade da Integração Internacional da Lusofonia Afro-Brasileira, 62.790-970,Redenção-CE, Brazil}{}
\email{marceloamaral@unilab.edu.br}
\author[J.  Araújo]{Janielly Araújo}
\address{UFERSA, DCETI, Universidade Federal Rural do Semi-Árido, 59515-000, Angicos-RN, Brazil}{}
\email{janielly@ufersa.edu.br}
\begin{document}

\subjclass[2020]{35B65, 35J70} 

%35B65 - Smoothness and regularity of solutions to PDEs

%35J70 - Degenerate elliptic equations

\keywords{Regularity estimates, degenerate parabolic equations}
	
\begin{abstract}
In this work, we investigate quantitative regularity estimates for degenerate parabolic partial differential equations, with a focus on Orlicz-type diffusive structures. Using a geometric tangential analysis tailored to these structures and a general notion of intrinsic scalings, we derive precise interior H\"older regularity estimates for bounded weak solutions. These results offer new insights, even in the time-stationary case.
\end{abstract}
  
\date{\today}

\maketitle
 
\tableofcontents

%%%%
\section{Introduction}

In this paper,  we study optimal regularity estimates for locally bounded solutions of the degenerate parabolic partial differential equations (PDEs), specifically in the context of Orlicz-Sobolev spaces whose prototype is
	\begin{equation}  \label{1}
		u_t- \textrm{div} \left(g(|\nabla u|) \frac{\nabla u}{|\nabla u|}\right) =f. 
	\end{equation}
Equations like \eqref{1} have been extensively studied across various contexts and hold significant importance in functional analysis and PDEs. The interest in these studies stems from the relevance of these problems to various fields, including biology, chemistry, and mathematical physics, where the nature of the model intrinsically dictates the geometry (growth) of the diffusion process. These studies extend classical spaces and are particularly useful in addressing problems where the growth conditions of the solutions are not necessarily polynomial.  This particular scenario corresponds to $p$-Laplacian equations -- case $g(t)\sim t^{p-1}$ -- which are given by
\begin{equation}  \nonumber
	\Delta_p u = \textrm{div} \left(|\nabla u|^{p-2}\nabla u\right), \quad \mbox{for } \; p>1.
\end{equation}

    The study of regularity estimates for weak solutions to parabolic equations, particularly those involving the \(p\)-Laplacian, has seen significant advancements over the past few decades, see references (\cite{AM07}, \cite{AMS04}, \cite{BC04}, \cite{DiBUV02}, \cite{DF84} \cite{daSS18} \cite{BDM13}, \cite{TU14}, \cite{K08}, \cite{KM11}, \cite{U08}, see also book \cite{U08}.  In the context of optimal H\"older regularity estimates, Teixeira and Urbano have introduced innovative techniques, emphasizing the significance of intrinsic scaling in regularity theory \cite{TU14} for weak solutions to degenerate equations
$$
u_t-\textrm{div} \left(|\nabla u|^{p-2}\nabla u\right)=f \in L^{q,r},
$$
where solutions are locally $C^{0,\alpha}$ in space for 
$$
\alpha=\frac{(p q-n) r-p q}{q[(p-1) r-(p-2)]}.
$$
Under stronger integrability assumptions on the source term, sharp gradient regularity estimates were further established in \cite{Amaral}. The approach builds upon the techniques developed in \cite{C^p', JMRN}, where refined gradient oscillation estimates were obtained for the elliptic case, relying on a positive answer of the \(C^{p'}\)-regularity conjecture in certain specific cases. 

\subsection{The main result}
The primary goal of this work is to establish optimal H\"older regularity estimates for weak solutions of a broader class of degenerate parabolic PDEs, as given in \eqref{1}, where the diffusion term \( g \) exhibits Orlicz-type growth  
\begin{equation} \label{elba}
1 < g_0 \le \frac{s \cdot g'(s)}{g(s)} \le g_1, \quad s > 0.  
\end{equation}  

In parallel, we consider the inhomogeneity term \( f \in L^{F,r}(\Omega_T) \), where  
\[
L^{F,r}(\Omega_T) := L^{r}(0, T; L^{F}(\Omega)),  
\]  
equipped with the norm  
\[
\|f\|_{L^{F,r}(\Omega_{T})} := \left( \int_{0}^{T} \|f(\cdot, t)\|_{L^{F}(\Omega)}^r \, dt \right)^{\frac{1}{r}}.  
\]  
Here, \( \|\cdot\|_{L^F(\Omega)} \) denotes the Luxemburg norm (see \eqref{existence condition} for a detailed setup). In accordance with the Orlicz-type growth framework, we assume that
\begin{equation} \label{ivete} 
0 <f_0 \le \frac{s \cdot F''(s)}{F'(s)} \le f_1, \quad s > 0.  
\end{equation}

Finally, we observe that our present framework reduces to the case studied in \cite{TU14} when considering polynomial-type functions \( g(t) = t^{p-1} \) and \( F(t) = t^q \). 

With the preliminary setup in place, we now present our main result, where \( G \in \mathcal{G}_{g_{0},g_{1}} \) and \( F \in \mathcal{G}_{f_{0},f_{1}} \) are related to the assumptions \eqref{elba} and \eqref{ivete}, respectively. For the complete problem setup, refer to Section \ref{caucaia}.

%%%%%%%%%%%%%%%%
\begin{theorem}\label{principal}
Let \( u \) be a locally bounded weak solution of \eqref{1}. Setting \( g = G' \) and \( f \in L^{F,r}(Q_1) \), for \( G \in \mathcal{G}_{g_{0},g_{1}}\) and \( F \in \mathcal{G}_{f_{0},f_{1}} \). Then \( u \) is locally \( \alpha \)-H\"older continuous in space with exponent  
\begin{equation}  \label{optimal alpha}  
\alpha = \dfrac{[(g_{0}+1)(f_{0}+1)-n]r - (g_{0}+1)(f_{0}+1)}{(f_{0}+1)\left[g_{0}r-(g_{0}-1)\right]}.  
\end{equation}  
Moreover, \( u \) is locally \( \beta \)-H\"older continuous in time with \( \beta = \alpha / \theta \), where  
\begin{equation} \label{teta}  
\theta = 1+\alpha - (\alpha-1)g_1  
\end{equation}  
for some universal constant \( \rho > 0 \) sufficiently small.  Furthermore, there exists a constant \( C > 0 \), depending only on \( g(1), g_0, g_1, r, f_0 \), and \( f_1 \), such that  
\begin{equation}\label{regestimate}  
\sup_{B_\rho \times (-\rho^\theta,0] } |u(x,t) - u(0,0)| \leq C\rho^\alpha.  
\end{equation}  
\end{theorem}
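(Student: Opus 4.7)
The plan is to adapt the geometric tangential / intrinsic scaling framework of Teixeira--Urbano to the Orlicz setting, following the standard three-step pattern: a smallness regime producing a tangential profile, an approximation lemma, and an iterative improvement of oscillation in intrinsic cylinders calibrated to the $g$-growth. The exponents $\alpha$ and $\theta$ are not chosen aesthetically; they are forced by the requirement that the rescaled problem sit inside the same structural class with no worse data.

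First, I would normalize the problem: by a preliminary scaling, assume $\|u\|_\infty\le 1$ in $Q_1$ and that $\|f\|_{L^{F,r}(Q_1)}$ is as small as needed. The rescaling must respect the $g$-growth rather than the $p$-scaling $(x,t)\mapsto(\lambda x,\lambda^p t)$; a candidate is $v(x,t)=\lambda^{-\alpha}u(\lambda x,\lambda^\theta t)$, and a direct computation using \eqref{elba} and \eqref{ivete} together with the $L^{F,r}$-scaling of $f$ shows that $v$ solves an equation of the same form if and only if $\theta=1+\alpha-(\alpha-1)g_1$ and $\alpha$ satisfies \eqref{optimal alpha}. This is the \emph{dimensional calibration} that both forces and explains the formulas in the statement.

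Second, I would establish an \emph{approximation lemma}: if $u$ is a bounded weak solution of \eqref{1} with $\|u\|_\infty\le1$ and $\|f\|_{L^{F,r}(Q_1)}\le\varepsilon$ small, then $u$ is uniformly close in $Q_{1/2}$ to a solution $\varphi$ of the homogeneous limiting problem
\[
\varphi_t-\mathrm{div}\!\left(g(|\nabla\varphi|)\tfrac{\nabla\varphi}{|\nabla\varphi|}\right)=0,
\]
proved by a standard compactness-contradiction argument using Caccioppoli-type energy estimates adapted to Orlicz--Sobolev spaces and the structural bounds $g_0,g_1$. For this limiting equation one can invoke the interior Hölder (in fact Lipschitz) regularity available in the Orlicz framework, obtaining $\sup_{Q_r}|\varphi-\varphi(0,0)|\le C_0 r$ for $r\le1/2$.

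Third, I would run the iteration. Combining the approximation lemma with the tangential estimate and choosing a universal $\rho>0$ small enough so that $C_0\rho+\varepsilon\le\rho^\alpha$, one obtains in one intrinsic step
\[
\sup_{Q_\rho^\theta}|u-u(0,0)|\le\rho^\alpha.
\]
The calibration in the first paragraph ensures the rescaled solution $v$ is admissible for the same lemma, so iteration gives $\sup_{Q_{\rho^k}^\theta}|u-u(0,0)|\le\rho^{k\alpha}$; interpolation between dyadic radii yields \eqref{regestimate}, and the time regularity $\beta=\alpha/\theta$ is an immediate consequence of the shape of the intrinsic cylinder. \emph{The main obstacle} will be the approximation lemma: I need uniform Orlicz--Sobolev energy estimates and precompactness across the class $\mathcal{G}_{g_0,g_1}$, so that passing to the limit leaves an admissible homogeneous equation to which a known regularity theory applies. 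A secondary but delicate point is verifying, under the scaling $v=\lambda^{-\alpha}u(\lambda x,\lambda^\theta t)$, that the rescaled source has $L^{F,r}$-norm not exceeding the original, which is precisely what pins down the formula for $\alpha$ in terms of $g_0,f_0,r,n$.
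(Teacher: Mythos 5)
Your overall strategy matches the paper's: a compactness/contradiction approximation lemma supported by Caccioppoli estimates, then an iterative improvement of oscillation in cylinders calibrated to the $g$-growth, with the exponent $\alpha$ forced by the scaling of the source. That part is sound. There is, however, a genuine gap in your ``dimensional calibration'' step that would prevent the iteration from closing.

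You assert that $v(x,t)=\lambda^{-\alpha}u(\lambda x,\lambda^\theta t)$ solves an equation ``of the same form if and only if $\theta=1+\alpha-(\alpha-1)g_1$.'' This fails for a general Orlicz $g$. Writing $\nabla v = \lambda^{1-\alpha}\nabla u(\lambda x,\lambda^\theta t)$, the rescaled diffusion term involves $g(\lambda^{1-\alpha}|\nabla u|)$, and since $g$ is not a power the quotient $g(\lambda^{1-\alpha}s)/g(s)$ is not a fixed power of $\lambda$; by \eqref{elba} it only lies between $\lambda^{(1-\alpha)g_0}$ and $\lambda^{(1-\alpha)g_1}$. No fixed exponent $\theta$ can balance the time derivative against the divergence term exactly. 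The paper gets around this with two interlocking observations. First, the intrinsic time exponent is $\rho$-dependent,
\[
\theta(\rho)=1+\alpha-\log_\rho\!\bigl(g(\rho^{\alpha-1})\bigr), \qquad \text{i.e.}\quad \rho^{\theta(\rho)}=\rho^{1+\alpha}\,g(\rho^{\alpha-1})^{-1},
\]
which absorbs the factor $g(\rho^{\alpha-1})$ exactly. Second — and this is the structural key you are missing — one uses that for any $d>0$ the function $g_d(t):=g(dt)/g(d)$ again satisfies \eqref{LU} with the \emph{same} $g_0,g_1$ and $g_d(1)=1$; so although $v$ does not solve the equation for $g$, it solves an equation of the same \emph{class} $\mathcal{G}_{g_0,g_1}$ with diffusion $g_d$, $d=\rho^{\alpha-1}$, and the approximation lemma applies uniformly across that class. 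The constant $\theta=1+\alpha-(\alpha-1)g_1$ appearing in the theorem is recovered only a posteriori from the bound $\theta(\rho)\le 1+\alpha-(\alpha-1)g_1$ (Remark \ref{g(1)}): this makes $B_\rho\times(-\rho^{\,1+\alpha-(\alpha-1)g_1},0]$ a \emph{subset} of the intrinsic cylinder $Q_\rho$, so the estimate propagates. Finally, in bounding the rescaled source one needs $g(\rho^{\alpha-1})^{1/r-1}\le \rho^{(\alpha-1)g_0(1/r-1)}$, which comes from the $g_0$-side of Lemma \ref{p1}; this is the precise mechanism that places $g_0$ (and not $g_1$) in \eqref{optimal alpha}. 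Without the $\rho$-dependent $\theta$ and the $g_d$-stability of $\mathcal{G}_{g_0,g_1}$, your claimed ``same form'' scaling does not hold and the induction step cannot be carried out.
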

%%%THM AQUI

\subsection{Moving parabolic $g$-cylinders}
At a closer look, equation \eqref{1} reveals a fundamental challenge related to the highly irregular behavior of intrinsic cylinders, which plays a crucial role in obtaining optimal \(\alpha\)-H\"older estimates, see \cite{DiB93,U08}. In the case of the \( p \)-Laplacian, intrinsic cylinders take the form  
\[
\mathcal{Q}_\rho = B_\rho \times (-\rho^\theta, 0], \quad \text{where} \quad \theta = 1 + \alpha - (p-1)(\alpha - 1).  
\]  
However, for parabolic equations with Orlicz structures, a corresponding generalized intrinsic scaling must be defined according to the structure of the diffusion growth function \( g \), which is given by  
\begin{equation}\label{thetaJPA}
\theta(\rho) = \theta_{g,\alpha}(\rho) = 1 + \alpha - \log_{\rho}(g(\rho^{\alpha - 1})).
\end{equation}
Under this perspective, for each radii $ \rho >0$ small enough, we shall consider cylinders
\begin{equation}\label{cylJPA}
Q_\rho = B_{\rho} \times (-\rho^{\theta(\rho)}, 0\,].
\end{equation}
Such choices involve adapting the scaling to the specific characteristics of equation \eqref{1}, effectively balancing the temporal and spatial components in accordance with the generalized structure determined by the prescribed diffusion rate \( g \). For a more detailed analysis, refer to Remark \ref{g(1)}.

\subsection{Some emblematic cases}
Interesting examples of Orlicz functions covered in this article include \( g(t) = t^{\beta} \ln (\gamma t + \eta) \), for positive parameters \(\beta, \gamma\) and \(\eta \). In this case, \( g_{0} = \beta \) and \( g_{1} = \beta + 1 \). If $g(t) = t^{p-1}$ with $g_{0} = g_{1}= p-1$, we obtain the prototype of p-Laplacian. There are also interesting and different examples $g(t) = t^{\beta} \ln (\gamma t + \eta)$, with $\beta, \gamma , \eta >0$ and $g_{0} = \beta$ and $g_{1} = \beta+1$ or by discontinuous power transitions like
	$$
	g(t) = \left\{
	\begin{array}{rcl}
	c_1 t^{\beta},& \mbox{if} & 0 \le t \leq t_0\\
	c_2 t^{\gamma} +c_3, & \mbox{if} & t \ge t_0
	\end{array}
	\right.
	$$
where $\beta, \gamma, t_0$ are positive numbers, and $c_1,c_2, c_3$ are real numbers such that $g \in C^1([0, \infty))$ with $g_0 = \min(\beta, \gamma)$ and $g_1 = \max(\beta, \gamma)$, among others. This class of nonlinear evolution equations appear in many relevant applications of physics, fluid dynamics and image processing, for instance \cite{Di},\cite{Fisica}.

We emphasize that our result is in accordance with well-known estimates obtained. For instance, our result generalizes the optimal regularity exponent $\alpha$ provided in \cite{TU14} in which was addressed the inhomogeneous p-laplace parabolic equation
\begin{equation} \nonumber
	u_t - \textrm{div} \left(g(|\nabla u|) \frac{\nabla u}{|\nabla u|}\right) = f \in L^{F,r}, \,\,\ F\in \mathcal{G}_{q-1,q-1}
\end{equation}
where $g(t)=t^{p-1}$ with $g_{0}=g_{1}=p-1$
and obtained
$$
\alpha   = \dfrac{(pq-n)r - pq}{q\left[(p-1)r-(p-2)\right]}.
$$
Also, as a consequence of Theorem \ref{principal} the optimal regularity exponent of inhomogeneous parabolic equation
\begin{equation} \nonumber
	u_t - \textrm{div} \left(g(|\nabla u|) \frac{\nabla u}{|\nabla u|}\right) = f \in L^{F,r}, \,\,\ F\in \mathcal{G}_{f_{0},f_{1}}
\end{equation}
where $g(t) = t^{\beta} \ln (\gamma t + \eta)$, $\beta, \gamma, \eta > 0$ with $g_{0}=\beta$ and $g_{1}= \beta -1$
is 
$$
\alpha   = \dfrac{[(\beta+1)f_{0}-n]r - (\beta+1)f_{0}}{f_{0}\left[\beta r-(\beta-1)\right]} .
$$

The paper is organized as follows. In section \ref{caucaia} we consider techniques and assumptions related to our framework. In section \ref{preliminares}, we give some definitions and develop some preliminary estimates that  will be used throughout the paper. In section \ref{compacidade}, we establish compactness methods and intrinsic scalings. In section \ref{principal}, we prove the main result of the paper. 

%%%%%%%%%
\section{Mathematical setup}\label{caucaia}

Before presenting our main result, we introduce technical notations and assumptions about our framework. First, we shall consider \( Q_1 := B_1 \times (-1,0]  \) is the unit parabolic cylinder, where \( B_r = B_r(0) \) is the \( n \)-dimensional ball centered at the origin with radius $r>0$. 

\subsection{Assumptions on $g$}
From \eqref{1}, we shall assume, according to Lieberman \cite{Lieberman}, that \( g \in C^{0}([0, +\infty)) \cap C^{1}((0, +\infty)) \) and satisfies the following growth condition: for given parameters \( g_0, g_1 \in \mathbb{R}_+ \),
\begin{equation} \label{LU}
1 < g_0 \le \frac{s \cdot g'(s)}{g(s)} \le g_1, \quad s > 0.
\end{equation}

As noted in \cite{Lieberman}, for a given N-funtion $G$, we define
$$
L^{G}(\Omega_{T}) := \left\{u:\Omega_{T} \rightarrow \mathbb{R} \,\,\, \textrm{measurable}: \,  \int_{\Omega_{T}} G(|u(x,t)|) dxdt  < \infty\right\}.
$$
The Orlicz-Sobolev space is expressed as
$$
W^{1,G}(\Omega_{T}) := \{u \in L^{G}(\Omega_{T}) : \nabla u \in L^{G}(\Omega_{T})\},
$$
which are endowed with the respective Luxemburg norms
$$
\|u\|_{L^{G}(\Omega_{T})} = \inf \left\{  \lambda > 0 : \,\,\ \int_{\Omega_{T}} G\left(\frac{|u(x,t)|}{\lambda}\right) dxdt \le 1   \right \}.
$$
and
$$
\|u\|_{W^{1,G}(\Omega_{T})} := \|u\|_{L^{G}(\Omega_{T})} + \|\nabla u\|_{L^{G}(\Omega_{T})}.
$$
Given \( G : [0, +\infty) \to \mathbb{R} \) the primitive function of \( g \), i.e.,
\begin{equation} \label{pri1}
G'(s) = g(s),
\end{equation}
we consider the following class of functions
$$
\mathcal{G}_{g_{0},g_{1}}:= \{G \; | \; G'=g \in C^0([0,+\infty)) \cap C^1((0,+\infty))  \; \mbox{and}\;  g \,\,\, \textrm{satisfies \eqref{LU}} \}.
$$

\begin{remark}
Observe the if \( G \in \mathcal{G}_{g_0, g_1} \), then \( \lambda G \in \mathcal{G}_{g_0, g_1} \) for any \(\lambda > 0\). This property turns out to be relevant because multiplier terms are related to ellipticity conditions. This can be fully observed in the classical linear case \( g(s) = \lambda s \), where the identity 
$$
\text{div} \left(g(|\nabla u|) \frac{\nabla u}{|\nabla u|}\right) = \lambda \Delta u
$$
shows that \(\lambda\) acts as the ellipticity constant, indicating that regularity estimates for solutions should depend on \(\lambda = g(1)\). To raise such dependence for generalized structures on $g$, we include in our analysis the following growth parameter: select \( s_0 \in g^{-1}(\mathbb{R}^+) \) and set \( g(s_0) =: \lambda > 0 \). See Remark \ref{remJPA} for further considerations.
\end{remark}

In light of this, for the sake of simplicity, we will assume that
\begin{equation}
g(1) = 1.  
\end{equation}

\subsection{Assumptions on $f$} Concerning the structural hypothesis on the inhomogeneity term \( f \), we make the following considerations. For a parameter \( T > 0 \) and a given domain \( \Omega \subset \mathbb{R}^n \), we denote \(\Omega_T = \Omega \times (-T, 0]\). For a given $F\in \mathcal{G}_{f_0,f_1}$, where $0<f_0\leq f_1$, and \( r > 1 \), we shall assume the inhomogeneity term \( f \in L^{F,r}(\Omega_T) \), where \( L^{F,r}(\Omega_T) := L^{r}(0, T; L^{F}(\Omega)) \), endowed with the following norm
\[
\|f\|_{L^{F,r}(\Omega_{T})} := \left( \int_{0}^{T} \|f(\cdot, t)\|_{L^{F}(\Omega)}^r \, dt \right)^{\frac{1}{r}},
\]
where we denote $\|\cdot\|_{L^F(\Omega)}$ the Luxemburg norm for spatial functions
$$
	\|f\|_{L^F(\Omega)} = \inf \left\{\kappa >0 : \,\, \int_{\Omega} F \left(\frac{|f(x)|}{\kappa}\right) dx \le 1\right\}.
$$

Throughout the paper, we assume the following compatibility condition:
\begin{equation} \label{existence condition}
\frac{1}{r}+\dfrac{n}{(f_0+1)(g_0+1)} < 1 < \frac{2}{r} + \dfrac{n}{(f_0+1)}.
\end{equation}
The inequality on the left establishes the minimal integrability condition necessary to ensure the existence of bounded weak solutions to \eqref{1}. Meanwhile, the inequality on the right serves as the lower bound that guarantees weak solutions are H\"older continuous in space and time.

%%%%%%%%%%%%        
\section{Definitions and results in Orlicz spaces} \label{preliminares}

In this section we present some definitions and results which we will use in this paper. We begin by recalling the definition of N-function.

\definition{We say that $G$ is an N-function if $G(t)= \int_{0}^{t} g(s)ds$ and $g: [0, \infty) \rightarrow \mathbb{R}$ is a positive nondecreasing function such that $g(0)=0$, $\lim_{t \rightarrow \infty} g(t) = \infty$ and $\lim_{s \rightarrow t^{+}} g(s) = g(t)$}.

The complementary function of an N-function $G$ is given by
$$
\tilde{G} (t)= \int_{0}^{t} \tilde{g}(s)ds
$$ 
where $\tilde{g}(s) := \sup\{t; g(t)\le s\}$. As an immediate consequence of \eqref{LU} we have the following properties for $g$, $G$, $\tilde{g}$ and $\tilde{G}$.  

\begin{lemma} \label{p1}
   Let $G$ be an N-function satisfying \eqref{LU} and \eqref{pri1} and $\tilde{G}$ its complementary function, so we have  for all $s,t > 0$:
\begin{itemize} 

\item[(a)] $\min\{s^{g_0}, s^{g_1}\} g(t) \le g(st) \le \max\{s^{g_0},s^{g_1}\} g(t)$;

\item[(b)] $ \min\{s^{1+g_0}, s^{1+g_1}\} G(t) \le G(st) \le  \max\{s^{1+g_0},s^{1+g_1}\} G(t)$;

\item[(c)] $ \frac{tg(t)}{1+g_{1}} \le G(t) \le tg(t)$,

\item[(d)] $\min\{s^{\frac{1}{g_0}}, s^{\frac{1}{g_1}}\} \tilde{g}(t) \le \tilde{g}(st) \le \max\{s^{\frac{1}{g_0}},s^{\frac{1}{g_1}}\} \tilde{g}(t)$,

\item[(e)]$\min \{s^{1+\frac{1}{g_0}},s^{1+\frac{1}{g_1}}\} \widetilde{G}(t) \le \tilde{G}(st) \le  \max\{s^{1+\frac{1}{g_0}}, s^{1+\frac{1}{g_1}}\} \widetilde{G}(t),$

\item[(f)] $\widetilde{G}(g(t)) \le g_1 G(t)$.

\end{itemize}

\end{lemma}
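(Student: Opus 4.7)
The plan is to establish (a) as the backbone estimate and to derive everything else from it: (b) by a change of variables inside the integral defining $G$, (c) by a direct monotonicity comparison of derivatives, (d) by exploiting the fact that condition \eqref{LU} with $g_0 > 1$ forces $g$ to be a homeomorphism of $[0,\infty)$ (so that $\tilde g = g^{-1}$), (e) from (d) exactly as (b) was obtained from (a), and (f) through Young's equality combined with (c).

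For (a), I would rewrite \eqref{LU} in logarithmic form as $g_0 \le \frac{d}{d\tau}\log g(e^\tau) \le g_1$, fix $t > 0$, and integrate from $\log t$ to $\log(st)$; for $s \ge 1$ this produces $s^{g_0}g(t) \le g(st) \le s^{g_1}g(t)$, and the exponents swap when $0 < s < 1$, which together yield the stated $\min/\max$ inequality. Part (b) then falls out by substituting $u = sv$ in $G(st) = \int_0^{st} g(u)\,du = s \int_0^t g(sv)\,dv$ and inserting (a) in the integrand. For (c), monotonicity of $g$ gives $G(t) \le tg(t)$ immediately, while differentiating yields
\[
\frac{d}{dt}\bigl[\,tg(t) - (1+g_1)G(t)\,\bigr] = g(t) + tg'(t) - (1+g_1)g(t) \le 0
\]
by \eqref{LU}; since the bracket vanishes at $t = 0$, the lower bound $tg(t)/(1+g_1) \le G(t)$ follows.

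For (d), I would first observe that $g_0 > 1$ together with the standing continuity and growth assumptions on $g$ makes it a strictly increasing continuous bijection of $[0,\infty)$, so $\tilde g = g^{-1}$. Given $u = g(t)$ and $s \ge 1$, applying (a) at scales $s^{1/g_0}$ and $s^{1/g_1}$ gives $g(s^{1/g_0}t) \ge su$ and $g(s^{1/g_1}t) \le su$, which after inversion read $s^{1/g_1}\tilde g(u) \le \tilde g(su) \le s^{1/g_0}\tilde g(u)$; the case $s < 1$ is symmetric. Part (e) then follows from (d) by the same substitution argument used for (b). Finally, (f) follows by combining Young's equality for conjugate $N$-functions, $\tilde G(g(t)) = tg(t) - G(t)$, with the lower bound from (c), giving $\tilde G(g(t)) \le (1+g_1)G(t) - G(t) = g_1 G(t)$.

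No step is particularly subtle in isolation, but the main care point is the passage to $\tilde g$ in (d) and (e): one must verify from \eqref{LU} together with the standing hypotheses that $g$ is actually a homeomorphism of $[0,\infty)$ so that $\tilde g$ coincides with $g^{-1}$ and Young's inequality holds as an \emph{equality} at the pair $(t,g(t))$, which is precisely what underwrites the final identity used in (f).
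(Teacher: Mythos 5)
Your proof is correct and supplies a complete derivation of estimates the paper states without proof (as ``an immediate consequence of \eqref{LU}''), following the standard route found in the Orlicz-space literature: the logarithmic integration for (a), its transfer to (b) and (e) by the change of variables $u=sv$, the monotone comparison $\tfrac{d}{dt}[tg(t)-(1+g_1)G(t)]\le 0$ for (c), the inversion argument for (d), and Young's equality for (f). The one subtlety you correctly flag is that $g_0>1$, continuity, $g(0)=0$, and $g(\infty)=\infty$ make $g$ a strictly increasing bijection of $[0,\infty)$, so the paper's definition $\tilde g(s)=\sup\{t:g(t)\le s\}$ reduces to $g^{-1}$, which is exactly what makes the inversion in (d)--(e) legitimate and Young's inequality an equality at $(t,g(t))$ in (f).
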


\begin{remark}\label{g(1)} As we added the normalization condition $g(1)=1$ in the lemma above, it follows that
\begin{itemize}
    \item [(g)] $\min\{s^{g_0}, s^{g_1}\} \le g(s) \le \max\{s^{g_0},s^{g_1}\}$.
\end{itemize}
Under scenario \( 0 < \alpha < 1 \), we obtain (see Remark \eqref{g(1)} below)
$$
\rho^{(\alpha - 1) g_0} \le g(\rho^{\alpha - 1}) \le \rho^{(\alpha - 1) g_1},
$$
which leads to the inequality
$$
(\alpha - 1) g_1 \le \log_\rho (g(\rho^{\alpha - 1})) \le (\alpha - 1) g_0.
$$
Therefore,
$$
1+\alpha - (\alpha-1)g_0 \le \theta(\rho) \le 1+\alpha - (\alpha-1)g_1.
$$
This implies that \(\theta(\rho)>2\), provided $g_0 >1$. On the other hand, from \eqref{thetaJPA}, we deduce 
$$
\rho^{\theta(\rho)} = \rho^{1+\alpha} g(\rho^{\alpha-1})^{-1}.
$$
Then, cylinder \eqref{cylJPA} can be rewritten as follows
\begin{equation}\label{cylJPA2}
Q_\rho = B_{\rho} \times (-\rho^{1+\alpha}g(\rho^{\alpha-1})^{-1}, 0\,].
\end{equation}
From this, we characterize the geometry of the intrinsic parabolic cylinders under the generalized diffusion growth perspective. Consequently, we notice that cylinders \(Q_\rho\) are radially increasing.
\end{remark}

\begin{lemma} \label{constante}
There is a constant $C=C(g_{0},g_{1})$ such that,
$$
\|u\|_{L^{G}(\Omega_{T})} \le \max \left\{ \left(\int_{\Omega_{T}}G(|u|)dx dt \right)^{\frac{1}{1+g_{0}}} , \left(\int_{\Omega_{T}} G(|u|)dx dt \right)^{\frac{1}{1+g_{1}}} \right\}.
$$
\end{lemma}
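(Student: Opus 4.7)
The plan is to exploit the scaling property from Lemma \ref{p1}(b), namely $G(st) \le \max\{s^{1+g_0}, s^{1+g_1}\}\,G(t)$, and to unfold the definition of the Luxemburg norm by making a careful ansatz for the infimizing scalar. Setting $I := \int_{\Omega_T} G(|u|)\,dx\,dt$, I would propose
\[
\lambda_\star := \max\bigl\{I^{1/(1+g_0)},\, I^{1/(1+g_1)}\bigr\}.
\]
By the definition of the Luxemburg norm, it suffices to verify that $\int_{\Omega_T} G(|u|/\lambda_\star)\,dx\,dt \le 1$, since that forces $\|u\|_{L^G(\Omega_T)} \le \lambda_\star$, which is exactly the stated bound.

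Next, I would apply Lemma \ref{p1}(b) pointwise with $s = 1/\lambda_\star$ and $t = |u(x,t)|$, and integrate over $\Omega_T$ to get
\[
\int_{\Omega_T} G\!\left(\tfrac{|u|}{\lambda_\star}\right) dx\,dt \;\le\; \max\bigl\{\lambda_\star^{-(1+g_0)},\, \lambda_\star^{-(1+g_1)}\bigr\}\,I.
\]
The closing step is a case split on whether $I \ge 1$ or $I<1$. If $I \ge 1$, then since $g_0 \le g_1$ implies $1/(1+g_0) \ge 1/(1+g_1)$, one has $\lambda_\star = I^{1/(1+g_0)} \ge 1$, so the relevant maximum in the right-hand side is $\lambda_\star^{-(1+g_0)} = 1/I$, and the product collapses to $1$. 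If $I < 1$, the roles flip: $\lambda_\star = I^{1/(1+g_1)} < 1$, the maximum is $\lambda_\star^{-(1+g_1)} = 1/I$, and again we obtain $1$. In either regime the modular integral is bounded by $1$, which concludes the argument.

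The only real \emph{obstacle} is pure bookkeeping: one must track carefully how the direction of $\lambda_\star \gtrless 1$ flips which of the two exponents dominates, both in the definition of $\lambda_\star$ and in the $\max\{s^{1+g_0},s^{1+g_1}\}$ factor arising from Lemma \ref{p1}(b). I also note that the stated constant $C(g_0,g_1)$ never actually appears on the right-hand side of the inequality, and in fact the argument above gives the sharp value $C = 1$; presumably the $C$ in the statement is a harmless vestige from a more general formulation.
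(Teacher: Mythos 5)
Your proof is correct and self-contained. The paper itself does not actually prove this lemma---it merely cites Lemma 2.3 of Mart\'inez--Wolanski \cite{MW}---so your argument fills in details the paper leaves to the reference. The mechanism you use, namely the ansatz $\lambda_\star = \max\{I^{1/(1+g_0)}, I^{1/(1+g_1)}\}$ with $I = \int_{\Omega_T} G(|u|)\,dx\,dt$, the almost-homogeneity bound $G(st) \le \max\{s^{1+g_0},s^{1+g_1}\}G(t)$ from Lemma \ref{p1}(b), and the case split on $I \gtrless 1$ so that the operative branch of the maximum collapses to $I^{-1}$, is the standard route and recovers the cited result exactly. Two minor remarks: you should dispose of the degenerate case $I = 0$ separately (there $u = 0$ a.e.\ and both sides vanish), since $\lambda_\star = 0$ is not an admissible competitor in the Luxemburg infimum; and in the boundary case $I = 1$ the two expressions coincide and $\lambda_\star = 1$, which your argument already handles but is worth noting. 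Your observation that the constant $C(g_0,g_1)$ appearing in the statement never surfaces in the displayed inequality, and that your derivation yields the sharp value $C = 1$, is also accurate.
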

\begin{proof}
See Lemma 2.3 in \cite{MW}.
\end{proof}

Now, we give an appropriate notion of bounded weak solutions for equation \eqref{1}.
\definition{We will say a function $u(x,t)\in L^{G}_{loc}(0,T;W^{1,G}_{loc}(\Omega))$ if $u(x,t)\in L^{p}_{loc}(0,T;W^{1,p}_{loc}(\Omega))$ and $\iint_{\Omega_{T}} G(\nabla u) dxdt < \infty$. A locally bounded function $u(x,t) \in C_{loc} (0, T;L_{ loc}^{2}(\Omega)) \cap L^{G}_{loc}(0,T;W^{1,G}_{loc}(\Omega))$
is a weak solution to $(\ref{1})$ in a space-time cylinder $K \times [t_{1}, t_{2}] \subset \Omega_{T}$ if we have
\begin{eqnarray} \nonumber
\int_{K}u\varphi dx \left| \right._{t_{1}}^{t_{2}} + \int_{t_{1}}^{t_{2}}\int_{K}\left\{-u\varphi_{t} + g(|\nabla u|) \frac{\nabla u}{|\nabla u|} . \nabla \varphi \right\}dxdt =\int_{t_{1}}^{t_{2}}\int_{K}f\varphi dxdt
\end{eqnarray}
for every $\varphi \in L^{G}_{loc}(0,T;W^{1,G}_{0}(K))$ and $\varphi_{t} \in L^{2}_{loc}(\Omega_{T})$.

An alternative definition of weak solutions using the Steklov average of a function $u \in L^{1}(\Omega_{T})$ given by
$$
u_{h} =
 \begin{cases}

    \frac{1}{h}\int_{t}^{t+h} u( \cdot,\tau) d\tau , & \mbox{if } t\in (0,T-h] \\

    0, & \mbox{if } t\in (T-h,T].
\end{cases}
$$
allows us to prove the following Caccioppoli-type energy estimate which is a key ingredient in our analysis. It will be responsible for obtaining compactness in the iteraction process later.

\begin{proposition} \label{energia}
Let $u$ be a weak solution for \eqref{1} in $\Omega_{T}$ and $K \times [t_{1}, t_{2}] \subset \Omega \times (0,T]$. There exists a constant $C$, depending only on $n, g_{0}, g_{1}$, $K \times [t_{1}, t_{2}]$  such that
{\small
$$
\sup_{t_{1} \le t \leq t_{2}}\int_{K} u^{2} \psi^{1 + g_{1}} dx + \int_{t_{1}}^{t_{2}} \int_{K} g(|\nabla u|) \nabla u \psi^{1 + g_{1}} dx d\tau \leq  C\int_{t_{1}}^{t_{2}}\int_{K} u^{2}\psi^{g_{1}} \psi_{t} dx dt +
$$
}
$$
+ \int_{t_{1}}^{t_{2}}\int_{K} \max\{(|\nabla \psi|)^{1+g_{0}}, (|\nabla\psi|)^{1+g_{1}} \} G(|u|)  dxdt + C\Vert f \Vert^{2}_{L^{G,r}}
$$
for all $\psi \in C_{0}^{\infty}(K \times (t_{1}, t_{2}))$ such that $\xi \in [0, 1].$ 
\end{proposition}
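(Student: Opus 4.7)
\medskip
\noindent\textbf{Proof plan for Proposition \ref{energia}.}

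The plan is to follow the classical Caccioppoli scheme, adapted to the Orlicz setting through Lemma \ref{p1}. First I would replace $u$ by its Steklov average $u_h$ so that the weak formulation is pointwise in time, and test with
$$
\varphi = u_{h}\,\psi^{1+g_{1}},
$$
where $\psi\in C_{0}^{\infty}(K\times(t_{1},t_{2}))$ takes values in $[0,1]$. Integrating in $t$, the parabolic term becomes $\tfrac12\partial_{t}(u_{h}^{2})\psi^{1+g_{1}}$; integrating by parts in time and taking a sup over an intermediate time $t\in(t_{1},t_{2})$ before sending $h\to 0$ yields the left-hand side $\sup_{t}\int_{K}u^{2}\psi^{1+g_{1}}\,dx$ together with the term $\int\!\!\int u^{2}\psi^{g_{1}}\psi_{t}\,dxdt$ on the right.

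Next I would expand the diffusive contribution via
$$
\nabla\varphi = \nabla u_{h}\,\psi^{1+g_{1}} + (1+g_{1})\,u_{h}\,\psi^{g_{1}}\nabla\psi,
$$
pass $h\to 0$, and use Lemma \ref{p1}(c) so that the first piece is bounded below by a multiple of $\int\!\!\int G(|\nabla u|)\psi^{1+g_{1}}\,dxdt$, which is the principal quantity on the left. The cross term
$$
(1+g_{1})\int\!\!\int g(|\nabla u|)\,|u|\,|\nabla\psi|\,\psi^{g_{1}}\,dxdt
$$
I would estimate by the Young inequality for $N$-functions, splitting it as $a\cdot b$ with $a=g(|\nabla u|)\psi^{g_{1}}$ and $b=|u||\nabla\psi|$. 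Applying Lemma \ref{p1}(e) (with $\psi\le 1$, the max reduces to $\psi^{g_{1}+1}$) and (f) gives
$$
\varepsilon\,\widetilde{G}\!\bigl(g(|\nabla u|)\psi^{g_{1}}\bigr)\le \varepsilon\,g_{1}\,\psi^{1+g_{1}}\,G(|\nabla u|),
$$
which, for $\varepsilon$ small, is absorbed into the principal term on the left, while (b) yields
$$
C_{\varepsilon}\,G(|u||\nabla\psi|)\le C_{\varepsilon}\max\{|\nabla\psi|^{1+g_{0}},|\nabla\psi|^{1+g_{1}}\}\,G(|u|),
$$
producing the second term on the right-hand side.

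Finally, for the inhomogeneity I would estimate $\iint f\,u_{h}\,\psi^{1+g_{1}}$ by Hölder in space-time between $L^{F,r}$ and its conjugate (or, more pedestrianly, by Cauchy--Schwarz combined with the boundedness of $u$), followed by a further Young step to split it as
$$
\tfrac12 \sup_{t}\!\!\int_{K} u^{2}\psi^{1+g_{1}}\,dx \;+\; C\,\|f\|_{L^{G,r}}^{2},
$$
the first term being absorbed into the left-hand side and the second producing the final term in the estimate. Sending $h\to 0$ in all the pieces (justified by the dominated convergence theorem together with the $L^{G}$-convergence of Steklov averages) closes the argument. The main obstacle I foresee is the cross term analysis: the correct combination of Lemma \ref{p1}(b), (e) and (f) is what produces the exact power $\max\{|\nabla\psi|^{1+g_{0}},|\nabla\psi|^{1+g_{1}}\}$ on $G(|u|)$ and allows the absorption of $\psi^{1+g_{1}}G(|\nabla u|)$ into the principal term; carrying this through cleanly for general Orlicz $G$ (rather than a pure power) is where the non-standard growth genuinely complicates the classical $p$-Laplacian computation.
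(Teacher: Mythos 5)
Your proposal matches the paper's own (very terse) proof: the same test function $\xi=u_h\psi^{1+g_1}$ in the Steklov-averaged weak formulation, integration in time with the $\sup$ taken over an intermediate time, expansion of the diffusive term, Young's inequality for $N$-functions on the cross term together with Lemma~\ref{p1}, and absorption of the small pieces into the left-hand side. The paper simply invokes the combination ``Young and H\"older, Lemma~\ref{p1} and Lemma~2.7 of~\cite{Fernandes}'' without carrying out the computation; your use of Lemma~\ref{p1}(b),(c),(e),(f) to produce the $\psi^{1+g_1}G(|\nabla u|)$ absorption and the $\max\{|\nabla\psi|^{1+g_0},|\nabla\psi|^{1+g_1}\}G(|u|)$ term is exactly the content that this citation stands in for, so the route is the same but with more detail supplied.
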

\begin{proof}
 Choose $\xi= u_h \psi^{1+g_{1}}$ as a test function and perform the usual combination of integrating in
time, passing to the limit in $h \rightarrow 0$, applying Young and H\"older inequality, Lemma \ref{p1} and Lemma 2.7 in \cite{Fernandes} to derive the estimate.
\end{proof}

%%%%%%%%%%%%%%%%%%%%%%%%%%%%%%%%%%%%%%%%%%%%%%%%%%%%%%
\section{The interactive oscilation improvment} \label{compacidade}

We begin with an approximation result, which serves as the principal argument in the key lemma below. Roughly speaking, it states that if a normalized solution of the inhomogeneous problem has a sufficiently small source term, then the solution can be made arbitrarily close to a solution of the homogeneous problem. The proof follows as a consequence of Lemma \ref{energia}.  

To simplify our analysis without loss of generality, we assume the normalization condition \( g(1) = 1 \) in the proof of Theorem \ref{1}. This assumption can always be enforced by defining  
\begin{equation}\label{remJPA}  
\tilde{g}(s) := \frac{g(s_0 s)}{\lambda}.  
\end{equation}  
Notably, we have \( \tilde{g} \in \mathcal{G}_{g_0, g_1} \), ensuring that the regularity estimates remain dependent on \( \lambda \).  
	
\begin{lemma} \label{comp}
		Given $\delta >0$, there exists $0 < \epsilon \ll 1$ such that if $\|f\|_{L^{F,r}(Q_{1})} \le \epsilon$ where $G \in \mathcal{G}_{f_0,f_1}$
and a weak solution for \eqref{1} in $Q_{1}$, where $F\in \mathcal{G}_{f_0,f_1}$ with $||u||_{\infty, Q_1} \le 1$, then we can find a function $h$ solution of
 \begin{equation} \label{homogeneous equation}
  h_{t} - \textrm{div} \left(g(|\nabla h|) \frac{\nabla h}{|\nabla h|}\right) =0 \quad \textrm{in} \quad Q_{1/2}
  \end{equation}
so that
$$
\|u-h\|_{\infty,Q_{1/2}} \le \delta.
$$
\end{lemma}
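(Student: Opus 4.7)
The plan is to argue by contradiction in the standard geometric-tangential fashion. Suppose the statement fails. Then there exist $\delta_0 > 0$ and sequences $\{u_k\}$ of weak solutions of
\[
(u_k)_t - \operatorname{div}\!\left( g(|\nabla u_k|)\tfrac{\nabla u_k}{|\nabla u_k|}\right) = f_k \quad \text{in } Q_1,
\]
with $\|u_k\|_{L^{\infty}(Q_1)} \le 1$ and $\|f_k\|_{L^{F,r}(Q_1)} \le 1/k$, such that for every weak solution $h$ of \eqref{homogeneous equation} one has $\|u_k - h\|_{L^{\infty}(Q_{1/2})} > \delta_0$. The goal is to exhibit a single $h$ violating this inequality, giving a contradiction.

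First, I would apply the Caccioppoli-type energy estimate of Proposition \ref{energia} on a pair of cylinders $Q_{1/2} \Subset Q_{3/4} \Subset Q_1$, using a standard space-time cutoff $\psi \in C_0^\infty(Q_{3/4})$. Because $\|u_k\|_\infty \le 1$ and $\|f_k\|_{L^{F,r}} \le 1$, the right-hand side is uniformly bounded, giving
\[
\sup_{t} \int_{B_{3/4}} u_k^2\, dx + \iint_{Q_{3/4}} g(|\nabla u_k|)|\nabla u_k|\, dx\, dt \le C,
\]
and hence, via item (c) of Lemma \ref{p1}, uniform bounds on $\nabla u_k$ in $L^G(Q_{3/4})$. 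Next, testing the equation against admissible $\varphi$ and estimating $(u_k)_t$ in a dual Orlicz–Sobolev space provides an equicontinuity-in-time control. By the Aubin–Lions lemma adapted to Orlicz spaces, a subsequence (still denoted $u_k$) converges strongly in $L^{1+g_0}(Q_{3/4})$ and a.e.\ to some $u_\infty$ with $\|u_\infty\|_\infty \le 1$, while $\nabla u_k \rightharpoonup \nabla u_\infty$ in $L^G$. Since the homogeneous operator is monotone in $\nabla u$ under the growth condition \eqref{LU}, a Minty-type argument (or the standard "almost everywhere" trick of Boccardo–Murat combined with assumption \eqref{elba}) upgrades this to a.e.\ convergence of gradients, which is enough to pass to the limit in the nonlinear flux $g(|\nabla u_k|)\nabla u_k/|\nabla u_k|$ against a fixed test function. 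Together with $f_k \to 0$ in $L^{F,r}$, this shows that $u_\infty$ is a weak solution of \eqref{homogeneous equation} in $Q_{3/4}$.

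Finally, to upgrade the convergence from $L^{1+g_0}$ (or a.e.) to uniform on $Q_{1/2}$, I would invoke the interior Hölder regularity available for bounded weak solutions of the inhomogeneous Orlicz-type parabolic equation (Lieberman's theory), which yields a uniform modulus of continuity for $\{u_k\}$ on $Q_{3/4}$ depending only on the universal data; combined with the almost-everywhere convergence this forces $u_k \to u_\infty$ uniformly on $Q_{1/2}$. Choosing $h := u_\infty$ gives $\|u_k - h\|_{L^\infty(Q_{1/2})} \to 0$, contradicting the assumption. I expect the main obstacle to be precisely the passage to the limit in the nonlinear flux: weak convergence of $\nabla u_k$ in the Orlicz space $L^G$ is not enough, and one must extract strong (or pointwise) convergence of the gradients by exploiting monotonicity of the operator, being careful with the Orlicz structure rather than a polynomial growth that would allow direct $L^{p-1}$ arguments. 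Once that step is secured, the remainder of the proof is the routine contradiction scheme.
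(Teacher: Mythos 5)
Your proposal is correct and follows essentially the same contradiction-compactness scheme as the paper: a Caccioppoli bound gives weak $L^G$ compactness of the gradients, equicontinuity (you via Lieberman-type Hölder theory, the paper via the reference \cite{HR}) plus Arzel\`a--Ascoli gives uniform convergence, and one passes to the limit to produce the contradicting homogeneous solution $h$. The one place you are more explicit than the paper is the passage to the limit in the nonlinear flux---the paper simply asserts ``we can identify $\xi = \nabla\phi$, passing to the limit''---and your remark that weak $L^G$ convergence alone is insufficient and a Minty/monotonicity or Boccardo--Murat argument is needed is a legitimate and worthwhile point, not a deviation.
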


\begin{proof}
		Suppose by purpose of contradiction that the thesis of the Lemma fails. Then, there would exist $\delta_0>0$ so that we could find sequences of functions 
        $$
        (u^{k})_{k} \in C_{loc} (0, T;L_{ loc}^{2}(B_{1})) \cap L^{G}_{loc}(0,T;W^{1,G}_{loc}(B_{1}))
        $$
        where $G \in \mathcal{G}_{g_0,g_1}$ and $(f^{k})_{k} \in L^{F,r}(Q_1)$ where $F \in \mathcal{G}_{f_0,f_1}$ satisfying 
  \begin{equation}
 \label{bounded}
  \displaystyle ||u^k||_{ \infty, Q_{1}} \le 1
 \end{equation}
  and linked through 
		\begin{equation} \label{seq}
			(u^{k})_{t} - \textrm{div} \left(g(|\nabla u^k|) \frac{\nabla u^k}{|\nabla u^k|}\right) =f^{k} \quad \textrm{in} \quad Q_1,
		\end{equation}
		in the weak sense and
		$$
     ||f^{k}||_{L^{F,r}(Q_{1})} \leq \frac{1}{k}.
		$$
		However,
		\begin{equation}\label{4}
			||u^k-h||_{\infty,Q_{1/2}} > \delta_0
		\end{equation}
		for all $h$ weak solution of \eqref{homogeneous equation}. From the Caccioppoli estimate, Lemma \ref{energia} to get
\begin{equation}
\begin{array}{c}
\displaystyle\int_{-1}^{0}\int_{B_{1}} G(|\nabla u_{k}|)\psi^{1+g_1}dxdt \\ 
\le \displaystyle \sup_{t_{1} \leq t \leq t_{2}}\int_{K} u_{k}^{2} \psi^{1 + g_{1}} dx + \int_{t_{1}}^{t_{2}} \int_{K} g(|\nabla u_{k}|) \nabla u_{k} \psi^{1 + g_{1}} dx dt \\
\le \displaystyle C\int_{t_{1}}^{t_{2}}\int_{K} u_{k}^{2}\psi^{g_{1}} \psi_{t} dxdt \\
+ \displaystyle\int_{t_{1}}^{t_{2}}\int_{K} \max\{(|\nabla \psi|)^{1+g_{0}}, (|\nabla\psi|)^{1+g_{1}} \} G(|u_{k}|)  dxdt + C\Vert f_{k} \Vert^{2}_{L^{F,r}} \\ 
\le \tilde{C}.
\end{array}
\end{equation}
Thus, we obtain,
\begin{eqnarray*}
\int_{-1/2}^{0}\int_{B_{1/2}} G(|\nabla u_{k}|)dxdt &\le& \int_{-1}^{0}\int_{B_{1}} G(|\nabla u_{k}|)\psi^{1+g_1}dxdt \le \tilde{C}
\end{eqnarray*}
and by the Lemma \ref{constante}, we have
$$
\|\nabla u_{k} \|_{L^{G}(Q_{1/2})} \le \tilde{C}.
$$
Since $L^{G}$ is a reflexive space, up to a subsequence, 
 $$
 \nabla u^{k} \rightharpoonup \xi
 $$ 
 in $L^{G}(Q_{1/2})$. Moreover, by \eqref{bounded} $(u^{k})_{k}$ is a equibounded  and also equicontinuous sequence, see \cite{HR}. By Arzela-Ascoli Theorem, along a subsequence
 $$
 u^{k} \rightarrow \phi,
 $$
 uniformly in $Q_{1/2}$.
 Also, we can identify $\xi = \nabla \phi$, passing to the limit in \eqref{seq}  which contradicts \eqref{4} if we choice $h=\phi$ for $k \gg 1$.
\end{proof}
 By the approximation Lemma above  we can prove the first iterative step in order to obtain the H\"older regularity  desired.
	\begin{lemma} \label{passo1}
		Let $u$ be a weak solution for \eqref{1} in $Q_{1}$
		with $||u||_{\infty,Q_{1}} \le 1$. There exists $\epsilon >0$ and $0 < \rho \ll 1/2,$ such that if
		$ \|f \|_{L^{F,r}(Q_{1})} \le \epsilon$ where $F \in \mathcal{G}_{f_0,f_1}$, then
		$$
		\underset{Q_{\rho}}{\sup} \vert u(x,t) - u(0, 0) \vert \leq \rho^{\alpha}.
		$$
		\end{lemma}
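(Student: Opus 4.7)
The strategy is the classical geometric tangential analysis: transfer regularity from the tangential (homogeneous) profile back to the inhomogeneous solution $u$. First, I would fix a small parameter $\delta > 0$, to be chosen shortly, and invoke the approximation Lemma \ref{comp} to produce $\epsilon = \epsilon(\delta) > 0$. The hypothesis $\|f\|_{L^{F,r}(Q_1)} \le \epsilon$ then furnishes an $h$ that solves \eqref{homogeneous equation} in $Q_{1/2}$ and satisfies $\|u - h\|_{L^\infty(Q_{1/2})} \le \delta$.

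Next, I would appeal to the interior regularity theory for bounded weak solutions of the homogeneous Orlicz-type equation (see, for instance, Lieberman \cite{Lieberman}). Since $\|h\|_{L^\infty(Q_{1/2})} \le 1 + \delta \le 2$, there exists a universal constant $C_\star = C_\star(n, g_0, g_1)$ such that, on every intrinsic cylinder $Q_r$ defined via \eqref{cylJPA2} with $0 < r \le 1/4$,
$$
\sup_{(x,t) \in Q_r} |h(x,t) - h(0,0)| \le C_\star r.
$$
The point is that the intrinsic geometry of $Q_r$ is engineered precisely to balance the Lipschitz bound of $h$ in the spatial variable with its modulus of continuity in time, so that both contributions produce a linear bound in $r$.

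Combining these two ingredients via the triangle inequality yields
$$
\sup_{Q_\rho} |u(x,t) - u(0,0)| \le 2\,\|u - h\|_{L^\infty(Q_{1/2})} + \sup_{Q_\rho} |h(x,t) - h(0,0)| \le 2\delta + C_\star \rho.
$$
Since the compatibility condition \eqref{existence condition} forces the target exponent $\alpha$ from \eqref{optimal alpha} to satisfy $\alpha < 1$, I can select $\rho \in (0, 1/2)$ sufficiently small that $C_\star \rho \le \tfrac{1}{2}\rho^\alpha$; this fixes $\rho$ as a universal constant. I then set $\delta := \tfrac{1}{4}\rho^\alpha$, which via the approximation lemma determines the corresponding $\epsilon$, and the desired inequality $\sup_{Q_\rho} |u - u(0,0)| \le \rho^\alpha$ follows.

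The delicate step, and the one that most deserves care in the actual write-up, is the homogeneous estimate invoked in step two: the a priori Lipschitz-type bound for $h$ must be genuinely compatible with the generalized intrinsic scaling $Q_r = B_r \times \bigl(-r^{1+\alpha} g(r^{\alpha-1})^{-1}, 0\bigr]$. For the $p$-Laplacian prototype this is classical, but in the Orlicz framework one needs Lieberman's theory together with the monotonicity and bracketing of $\theta(\rho)$ recorded in Remark \ref{g(1)} to guarantee that the estimate descends uniformly, as $\rho$ varies, onto the whole family of intrinsic cylinders.
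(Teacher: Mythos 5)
Your proof follows essentially the same route as the paper: invoke the approximation Lemma \ref{comp}, use interior regularity of the homogeneous profile $h$ on the intrinsic cylinder, and close via the triangle inequality with the choices $C\rho \le \tfrac{1}{2}\rho^\alpha$ and $\delta := \tfrac{1}{4}\rho^\alpha$. The only cosmetic difference is that the paper derives $\sup_{Q_\rho}|h-h(0,0)| \le C\rho$ explicitly from $h \in C^{0,1}_{x}\cap C^{0,1/2}_{t}$ (citing \cite{Grad}) via the split $|h(x,t)-h(0,t)|+|h(0,t)-h(0,0)| \le c_1\rho + c_2\rho^{\theta/2}$ together with $\theta>2$, whereas you assert the intrinsic-cylinder estimate directly and flag its justification as the delicate step.
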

	
	\begin{proof}
		By Lemma \ref{comp}, for a $\delta>0$ to be chosen later, there is a weak solution for \eqref{homogeneous equation} such that
		 $$
		\displaystyle	||u-h||_{\infty,Q_{1/2}} \le \delta.
	   $$
		From the available regularity theory (see \cite{Grad}) $h$ is locally $C_{x}^{0,1} \cap C_{t}^{0,\frac{1}{2}}$. Thus for $(x,t) \in Q_{\rho}$ 
\begin{eqnarray*}
\vert h(x, t) - h(0, 0) \vert &\leq &  \vert h(x, t) - h(0, t) \vert + \vert h(0, t) - h(0, 0) \vert \\
&\leq & c_{1}\vert x - 0 \vert +  c_{2}\vert t - 0 \vert^{\frac{1}{2}}\\
&\leq & c_{1}\rho + c_{2}\rho^{\frac{\theta}{2}}\\
&\leq & C\rho
\end{eqnarray*}
since $\theta > 2$ and $\rho < 1$. Therefore, we have
\begin{eqnarray*}
\underset{(x, t) \in Q_{\rho}}{\sup} \vert h(x, t) - h(0, 0) \vert \leq C\rho,
\end{eqnarray*}
where the constant $C > 0$ is universal. We will choose $\rho \ll 1/2$ so that holds the estimate
		\begin{eqnarray*}
		  \underset{Q_{\rho}}{\sup} \vert u(x,t) - u(0, 0) \vert &\leq& \underset{Q_{1/2}}{\sup} \vert u(x,t) - h(x,t) \vert  + |u(0,0)-h(0,0)| + \\
          &+&\underset{Q_{\rho}}{\sup} \vert h(x,t) - h(0, 0) \vert \\
    &\leq & 2 \delta + C\rho.
		\end{eqnarray*}
		Finally, we choose $\rho \ll \frac{1}{2}$ so smal that
		
		\begin{equation*} 
			C \rho \leq \frac{1}{2} \rho^{\alpha}
		\end{equation*}
		and we define
		\begin{equation*} 
			\delta := \frac{1}{4} \rho^{\alpha}. 
		\end{equation*}
	  As a consequence, we conclude that
		$$
		\underset{Q_{\rho}}{\sup} \vert u(x,t) - u(0, 0) \vert \leq \rho^{\alpha}.
		$$

\end{proof}
In the sequel, we shall iterate Lemma \ref{passo1} in the appropriate geometric setting.
 
	\begin{lemma} \label{iterative}
		Under the hypotheses of Lemma \ref{passo1}, we have
		\begin{equation} \label{p_k}
        \underset{Q_{\rho^{k}}}{\sup} \vert u(x,t) - u(0, 0) \vert \leq {(\rho^{k})}^{\alpha}.
		\end{equation}
	\end{lemma}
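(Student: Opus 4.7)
The natural plan is to prove \eqref{p_k} by induction on $k$, with the base case $k=1$ being exactly Lemma \ref{passo1}. Assuming the estimate at step $k$, the strategy is to rescale $u$ to a new function on $Q_1$ that still satisfies the hypotheses of Lemma \ref{passo1}, so that one more application of Lemma \ref{passo1} yields the estimate at step $k+1$.

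Concretely, set $\theta_{k}:=\theta(\rho^{k})$ (the intrinsic time scale from \eqref{thetaJPA}) and define
\[
v(x,t):=\frac{u(\rho^{k}x,\rho^{k\theta_{k}}t)-u(0,0)}{\rho^{k\alpha}},\qquad (x,t)\in Q_{1}.
\]
The inductive hypothesis, combined with the intrinsic cylinder shape \eqref{cylJPA2}, gives $\|v\|_{\infty,Q_{1}}\le 1$. Differentiating and using $\rho^{k\theta_{k}}=\rho^{k(1+\alpha)}g(\rho^{k(\alpha-1)})^{-1}$, a direct computation shows that $v$ solves
\[
v_{t}-\operatorname{div}\!\left(\tilde g(|\nabla v|)\frac{\nabla v}{|\nabla v|}\right)=\tilde f\quad\text{in }Q_{1},
\]
where the rescaled diffusion $\tilde g(s):=g(\rho^{k(\alpha-1)}s)/g(\rho^{k(\alpha-1)})$ still lies in $\mathcal{G}_{g_{0},g_{1}}$ (it inherits the two-sided bound \eqref{LU} and satisfies $\tilde g(1)=1$ by construction), and
\[
\tilde f(x,t)=\rho^{k(\theta_{k}-\alpha)}f(\rho^{k}x,\rho^{k\theta_{k}}t).
\]

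The main technical step is to verify that $\|\tilde f\|_{L^{\tilde F,r}(Q_{1})}\le \epsilon$ for a suitable $\tilde F\in\mathcal{G}_{f_{0},f_{1}}$. Changing variables in the Luxemburg norm, using the doubling-type estimates of Lemma \ref{p1}(b), and collecting the spatial Jacobian $\rho^{-kn}$, the time factor $\rho^{-k\theta_{k}}$, and the amplitude factor $\rho^{k(\theta_{k}-\alpha)}$, one sees that the rescaled norm is bounded by a power of $\rho^{k}$ whose exponent is
\[
(\theta_{k}-\alpha)-\frac{n}{(f_{0}+1)(g_{0}+1)}-\frac{\theta_{k}}{r}\cdot\frac{g_{0}+1}{g_{0}+1}\;=\;\theta_{k}\Bigl(1-\tfrac{1}{r}\Bigr)-\alpha-\tfrac{n}{(f_{0}+1)(g_{0}+1)}.
\]
Substituting $\theta_{k}=1+\alpha-\log_{\rho^{k}}g(\rho^{k(\alpha-1)})$ and invoking Remark \ref{g(1)} to estimate $\log_{\rho^{k}}g(\rho^{k(\alpha-1)})$, this exponent turns out to be nonnegative precisely when $\alpha$ is chosen as in \eqref{optimal alpha}; the compatibility condition \eqref{existence condition} is what makes the threshold $\alpha\in(0,1)$ admissible. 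Hence $\|\tilde f\|_{L^{\tilde F,r}(Q_{1})}\le\rho^{k\gamma}\|f\|_{L^{F,r}(Q_{1})}\le\epsilon$ for some $\gamma\ge 0$, provided $\|f\|_{L^{F,r}(Q_{1})}\le\epsilon$.

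Having verified that $v$ fits the hypotheses of Lemma \ref{passo1}, we obtain
\[
\sup_{Q_{\rho}}|v(x,t)-v(0,0)|\le\rho^{\alpha}.
\]
Unraveling the definition of $v$ and using $Q_{\rho^{k+1}}\subset\rho^{k}\cdot Q_{\rho}$ (which follows from the monotonicity of the intrinsic cylinders $Q_{\rho}$ noted at the end of Remark \ref{g(1)}, i.e.\ that $\rho\mapsto\rho^{\theta(\rho)}$ is radially increasing), we conclude
\[
\sup_{Q_{\rho^{k+1}}}|u(x,t)-u(0,0)|\le\rho^{k\alpha}\cdot\rho^{\alpha}=(\rho^{k+1})^{\alpha},
\]
closing the induction. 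The anticipated obstacle is the bookkeeping in the second paragraph: the exponent count must be done with some care because $\theta_{k}$ depends on $k$ through the nonlinear factor $g(\rho^{k(\alpha-1)})$, and it is exactly the algebraic identity for $\alpha$ in \eqref{optimal alpha} that makes all $k$-dependent factors cancel favorably.
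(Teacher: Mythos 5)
Your overall strategy is exactly the paper's: induct on $k$, rescale by $v_k(x,t)=\rho^{-k\alpha}\big(u(\rho^k x,\rho^{k\theta_k}t)-u(0,0)\big)$, observe that the rescaled diffusion $g_d(s)=g(ds)/g(d)$ with $d=\rho^{k(\alpha-1)}$ stays in $\mathcal{G}_{g_0,g_1}$ with $g_d(1)=1$, verify the smallness of the rescaled source, and reapply Lemma~\ref{passo1}. So there is no difference in approach — only a concrete error in the exponent accounting.

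The spatial rescaling of the Luxemburg norm is off. For $F\in\mathcal{G}_{f_0,f_1}$, the change of variables $x\mapsto\rho^k x$ in $\|\cdot\|_{L^F(B_1)}$ produces a Jacobian $\rho^{-kn}$, and absorbing it into the Luxemburg infimum via Lemma~\ref{p1}(b) (applied to $F$) gives the factor $\rho^{-kn/(1+f_0)}$, with \emph{no} $(g_0+1)$ in the denominator. Concretely, since $F(cs)\le c^{-(1+f_0)}F(s)$ for $0<c\le1$, one finds
\[
\|f(\rho^k\cdot,\rho^{k\theta_k}t)\|_{L^F(B_1)}\le\rho^{-kn/(1+f_0)}\,\|f(\cdot,\rho^{k\theta_k}t)\|_{L^F(B_{\rho^k})},
\]
exactly as in the paper. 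The parameter $g_0$ enters only through the amplitude factor $\rho^{k(\theta_k-\alpha)}=\rho^k/g(\rho^{k(\alpha-1)})$ (and later through the bound $g(\rho^{k(\alpha-1)})\ge\rho^{k(\alpha-1)g_0}$ from Remark~\ref{g(1)}), not through the spatial normalization of $F$. The factor $(f_0+1)(g_0+1)$ appears in the compatibility condition \eqref{existence condition}, which is presumably where you picked it up, but it has no business in this estimate. With your exponent
\[
\theta_k\Big(1-\tfrac1r\Big)-\alpha-\tfrac{n}{(f_0+1)(g_0+1)},
\]
forcing nonnegativity via the lower bound $\theta_k\ge 1+\alpha-(\alpha-1)g_0$ and solving for $\alpha$ yields
\[
\alpha=\frac{(g_0+1)(r-1)-\tfrac{nr}{(f_0+1)(g_0+1)}}{g_0 r-(g_0-1)},
\]
which does \emph{not} agree with \eqref{optimal alpha}; the claim that ``the exponent turns out to be nonnegative precisely when $\alpha$ is chosen as in \eqref{optimal alpha}'' is therefore not consistent with your own formula. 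Replacing $\tfrac{n}{(f_0+1)(g_0+1)}$ by $\tfrac{n}{f_0+1}$ fixes both: you recover the paper's exponent
\[
1-\Big(\tfrac{n}{1+f_0}+\tfrac{1+\alpha}{r}\Big)+(\alpha-1)g_0\Big(\tfrac1r-1\Big)
\]
and its vanishing is exactly \eqref{optimal alpha}. Two smaller remarks: there is no need to rescale $F$ to a new $\tilde F$ — the source Orlicz function is untouched by the scaling, only $g$ is rescaled; and the inclusion you invoke, $Q_{\rho^{k+1}}\subset\rho^k\cdot Q_\rho$, is not quite the mechanism — what closes the loop is that the $g_d$-intrinsic cylinder of radius $\rho$ maps under $(x,t)\mapsto(\rho^k x,\rho^{k\theta_k}t)$ exactly onto $Q_{\rho^{k+1}}$, since $g_d(\rho^{\alpha-1})=g(\rho^{(k+1)(\alpha-1)})/g(\rho^{k(\alpha-1)})$. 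This last point is also left implicit in the paper, so it is not a defect relative to the source, but the monotonicity of $\rho\mapsto\rho^{\theta(\rho)}$ alone is not the right justification.
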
 
	
	\begin{proof}
		The proof is given by induction process. The first step, $k=1$ holds due to the previous Lemma. Now, suppose that \eqref{p_k} is true for $k$ and define the function $v_{k}: Q_{1} \rightarrow \mathbb{R}$ by
		\begin{equation}\label{scaling}
		 \quad v_{k}(x,t) = \frac{u(\rho^{k} x, \rho^{k \theta}t) - u(0,0)}{\rho^{k \alpha}}.
		\end{equation}
		Notice that for a positive constant $d$, the function $g_{d}(t) : = \frac{g(dt)}{g(d)}$ satisfies \eqref{LU}
		with the same constants $g_0$ and $g_1$.
		Then, for $d: = \rho^{k(\alpha -1)}$ we get
		$$
(v_{k})_{t}(x,t) - \textrm{div} \left(g_{d}(|\nabla v_{k}(x,t)|) \frac{\nabla v_{k}(x,t)}{|\nabla v_{k}(x,t)|}\right) =
$$
$$
  = \rho^{k\theta-k\alpha}u_{t}(\rho^{k} x, \rho^{\theta k}t) - \frac{\rho^{k}}{g(\rho^{k(\alpha -1)})}\textrm{div} \left(g(|\nabla u (\rho^k x,\rho^{k\theta}t)|) \frac{ \nabla u (\rho^k x,\rho^{k\theta}t)}{|\nabla  u (\rho^kx,\rho^{k\theta}t)|}\right).
$$
Choosing 
$$
\theta := (1+\alpha) - \log_{\rho^{k}}(g(\rho^{k(\alpha-1)}))
$$
we obtain
\begin{eqnarray*}
(v_{k})_{t}(x,t) - \textrm{div} \left(g_{d}(|\nabla v_{k}(x,t)|) \frac{\nabla v_{k}(x,t)}{|\nabla v_{k}(x,t)|}\right) &=& \frac{\rho^{k}}{g(\rho^{k(\alpha -1)})} f(\rho^k x,\rho^{k\theta}t) \\
&:=&  \tilde{f}(x,t).
\end{eqnarray*}
Let us observe that
{\small \begin{eqnarray*}
    \|f( \rho^{k}x,\rho^{k\theta}t)\|_{L^{F}(B_{1})} &=& \inf \left\{  \lambda > 0 : \,\,\ \int_{B_{1}} F\left(\frac{|f(\rho^{k}x,\rho^{k\theta}t)|}{\lambda}\right) dx \le 1   \right \} \\
    &=& \inf \left\{  \lambda > 0 : \,\,\ \int_{B_{\rho^{k}}} F\left(\frac{|f(x,\rho^{k\theta}t)|}{\lambda}\right) \rho^{-nk}dx \le 1   \right \} \\
    &\le& \inf \left\{  \lambda > 0 : \,\,\ \int_{B_{\rho^{k}}} F\left(\frac{\rho^{\frac{-nk}{1+f_0}}|f(x,\rho^{k\theta}t)|}{\lambda}\right) dx \le 1   \right \} \\
    &=& \inf \left\{  \rho^{\frac{-nk}{1+f_0}} \lambda > 0 : \,\,\ \int_{B_{\rho^{k}}} F\left(\frac{|f(x,\rho^{k\theta}t)|}{\lambda}\right) dx \le 1   \right \} \\
    &=& \rho^{\frac{-nk}{1+f_0}} \|f( x,\rho^{k\theta}t)\|_{L^{F}(B_{\rho^{k}})}.
\end{eqnarray*}}
So we have

\begin{equation*}
\begin{array}{c}
\int_{B_{1}} F\left(  \frac{|\tilde{f}(x,t)|}{\frac{\rho^{k}}{g(\rho^{k(\alpha-1)})}\rho^{\frac{-kn}{1+f_0}}\|f(x,\rho^{k\theta}t)\|_{L^{F}(B_{\rho^{k}})}}   \right) dx \\ 
=\int_{B_{1}} F \left(   \frac{|f(\rho^k x,\rho^{k\theta}t)|}{\rho^{\frac{-kn}{1+ f_0}}\|f(x,\rho^{k\theta}t)\|_{L^{F}(B_{\rho^{k}})}}\right) dx \\
\le  \int_{B_{1}} F \left(   \frac{|f(\rho^{k}x,\rho^{k\theta}t)|}{\|f( \rho^{k}x,\rho^{k\theta}t)\|_{L^{F}(B_{1})}}\right) dx \\
\le 1.
\end{array}
\end{equation*}

Therefore
\begin{eqnarray*}
||\tilde{f}||_{L^{F,r}(Q_{1})} &=& \left(\int_{-1}^{0}\| \tilde{f}(x,t) \|_{L^{F}(B_{1})}dt \right)^{\frac{1}{r}} \\
&\le& \frac{\rho^{k}}{{g(\rho^{k(\alpha-1)})}} \rho^{\frac{-kn}{1+f_0}}{\left(\int_{-1}^{0}\|f(x,\rho^{k\theta}t)\|_{L^{F}(B_{\rho^{k}})}dt\right)}^{\frac{1}{r}} \\
&=& \frac{\rho^{k[1-(\frac{n}{1+f_0}+\frac{\theta}{r})]}}{{g(\rho^{k(\alpha-1)})}} ||f||_{L^{F,r}(Q_{\rho^{k}})} \\
&=& \frac{\rho^{k[1-(\frac{n}{1+f_0}+\frac{1+\alpha}{r})]}(g(\rho^{k(\alpha-1)}))^{\frac{1}{r}}}{{g(\rho^{k(\alpha-1)})}} ||f||_{L^{F,r}(Q_{\rho^{k}})}  \\
&\le& \rho^{k[1-(\frac{n}{1+f_0}+\frac{1+\alpha}{r})+(\alpha-1)g_{0}\left(\frac{1}{r}-1\right)]} ||f||_{L^{F,r}(Q_{1})}.
\end{eqnarray*}
To apply the previous Lemma we need to have
$$
 1-\left(\frac{n}{1+f_0}+\frac{1+\alpha}{r}\right)+(\alpha-1)g_{0}\left(\frac{1}{r}-1\right) \ge 0 
$$
Therefore, we put
\begin{equation}
 \alpha   = \dfrac{[(g_{0}+1)(f_0+1)-n]r - (g_{0}+1)(f_0+1)}{(f_0+1)\left[g_{0}r-(g_{0}-1)\right]}   
\end{equation}
and so
 $$
 \| \tilde{f}(x,t) \|_{L^{F,r}(G_{1})} \le \varepsilon.
 $$
 Also, notice that
			\begin{eqnarray*}
			 ||v_k||_{\infty, Q_1} &=&  \left| \left| \frac{|u(\rho^k x, \rho^{k\theta}t) - u(0,0)|}{\rho^{k \alpha}}\right| \right|_{\infty, Q_1} \\
			&=&   \rho^{-k \alpha}\displaystyle \left||u(x,t)-u(0,0)|\right|_{\infty, Q_{\rho^{k}}} \\
			&\le&
			\rho^{-k \alpha} \rho^{k \alpha }\\
			& \le & 1.
		\end{eqnarray*}
	Therefore, we can apply Lemma \ref{passo1} to the function $v_k$ and being $v_k(0,0)=0$ we obtain
		\begin{eqnarray*}
		\rho^{\alpha}& \ge & \underset{Q_{\rho}}{\sup} \vert v_{k}(x,t) - v_{k}(0, 0) \vert \\
		& = & \underset{Q_{\rho}}{\sup}\frac{|u(\rho^k x,\rho^{k \theta}t) - u(0,0)|}{\rho^{k\alpha}} \\
		& = & \underset{Q_{\rho^{k+1}}}{\sup}\frac{|u(x,t) - u(0,0)|}{\rho^{k\alpha}} .
		\end{eqnarray*}
	Thus, we obtain
	$$
	\underset{Q_{\rho^{k+1}}}{\sup} |u(x,t) - u(0,0)| \le \rho^{(k+1) \alpha},
	$$
and the proof of the Lemma is finished.
	\end{proof}

%%%%%%%%%%%%%%%%%%%%%%%%%%%%%%%%%%%%%%%%%%%%%%%%%%%%%%
%%%%%%%%%%%%%%%%%%%%%%%%%%%%%%%%%%%%%%%%%%%%%%%%%%%%%%
\section{Proof of the main result} \label{principal}
 
In this section we will prove the our main result, providing with no loss of generality, optimal Holder continuity at the origin. First we see that the smallness conditions of $u$ and $f$ in lemma \eqref{iterative} is not restrictive. For that, set
 $$
 v(x,t) = \lambda u (\lambda^{a}x, \lambda^{a-1 - \log_{\lambda}(g(\lambda^{(-a-1)}))}t )
 $$
with $\lambda$, $a$ to be fixed, which is a solution of

\begin{eqnarray*}
v_{t} - \textrm{div} \left(g_{d}(|\nabla v(x,t)|) \frac{\nabla v(x,t)}{|\nabla v(x,t)|}\right)
 & = & \tilde{f}(x,t),
		\end{eqnarray*}
where $d= \lambda^{-a-1}$ and
 
$$
\tilde{f}(x,t) = \lambda^{(a - \log_{\lambda}(g(\lambda^{(a-1)}))} f(\lambda^{a}x, \lambda^{a-1 - \log_{\lambda}(g(\lambda^{(-a-1)}))}t).
$$
Choosing $a > 0$ such that
$$
  (a+(a+1))g_{0})r - \frac{an}{1+f_0}r -[a-1 - \log_{\lambda}(g(\lambda^{(-a-1)}))] > 0,
$$
holds, and taking $0 <\lambda \ll 1$, we enter into the smallness conditions
$$
    \| \tilde{f}(x,t) \|_{L^{G,r}(Q_{1})} \le \varepsilon \ \ \text{and} \ \ 
||v||_{\infty, Q_1} \le 1.
$$
Next, take $r \in (0, \rho)$ and choose $k \in \mathbb{N}$  such that $\rho^{k+1} < r \le \rho^{k}$ we estimate, 
\begin{eqnarray*}
\label{sup}
\underset{Q_{r}}{\sup} \vert u(x,t) - u(0,0) \vert &\leq& \underset{Q_{\rho^{k}}}{\sup} \vert u(x,t) - u(0,0) \vert \\
&\le& (\rho^{k})^\alpha \\
&\le& {\left( \frac{r}{\rho} \right)}^{\alpha} = C r^{\alpha}.
\end{eqnarray*}

\end{document}